 \newtheorem{thm}{Theorem}[section]
 \newtheorem{lem}[thm]{Lemma}
 \theoremstyle{definition}
 \theoremstyle{remark}
 \newtheorem{rem}[thm]{Remark}
 \numberwithin{equation}{section}
\begin{document}

\title[]
{on the extension to mean curvature flow in lower dimension}

\author{Liang Cheng}

\dedicatory{}
\date{}

 \subjclass[2000]{
Primary 53C44; Secondary 53C42, 57M50.}

\keywords{Mean curvature flow; Extension; bounded mean curvature}

\address{Liang Cheng, School of Mathematics and Statistics, Huazhong Normal University,
Wuhan, 430079, P.R. CHINA}

\email{math.chengliang@gmail.com}

 \maketitle

\begin{abstract}
In this paper, we prove that if $M^n_t\subset \mathbb{R}^{n+1}$,
$2\leq n\leq 6$, is the $n$-dimensional closed embedded $\mathcal{F}-$stable solution to mean curvature flow with multiplicity one and its mean curvature of $M^n_t$ is uniformly
bounded on $[0,T)$ for $T<\infty$, then the flow can be smoothly extended over time $T$.
\end{abstract}

\section{Introduction}

Let $X_0:M^n\to\mathbb{R}^{n+1}$ be the $n$-dimensional closed embedded submanifold of $\mathbb{R}^{n+1}$ with $M_0=X_0(M^n)$. Consider the mean curvature flow
$$
\frac{\partial X(p,t)}{\partial t}=-H(p,t)\nu(p,t),
$$
satisfying $X(\cdot,t)=X_0(t)$ .
Here $H(p,t)$ and $\nu(p,t)$ are the mean curvature and the outer unit
normal at $X(p,t)$. We denote $A=(h_{ij})$ be the second fundamental form.

A natural question in the study of mean curvature flow is that what are the optimal conditions to guarantee
the existence of a smooth solution to mean curvature flow? First, Huisken proved the following theorem:
\begin{thm}[Huisken \cite{H2}]
Let $M^n_t$ be a smooth closed  solution to mean curvature flow for
$t\in[0,T)$ with $T<\infty$. If the second fundamental form of $M^n_t$ is uniformly
bounded on $[0,T)$, then the flow can be smoothly extended over time $T$.
\end{thm}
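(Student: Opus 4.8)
The plan is to upgrade the single pointwise bound $|A|\le C_0$ on $M^n\times[0,T)$ into uniform control of \emph{all} spatial derivatives of the second fundamental form, and then to pass to the limit $t\to T$ to produce smooth initial data from which the flow can be restarted. I would begin by recording the evolution of the induced metric,
$$
\frac{\partial}{\partial t}g_{ij}=-2Hh_{ij}.
$$
Since $|H|\le\sqrt{n}\,C_0$ and $|h_{ij}|\le C_0$, this forces $|\partial_t g_{ij}|\le c\,g_{ij}$ as bilinear forms for a constant $c=c(n,C_0)$, and integrating over the finite interval $[0,T)$ yields
$$
e^{-cT}g_{ij}(0)\le g_{ij}(t)\le e^{cT}g_{ij}(0).
$$
Thus the metrics stay uniformly equivalent and no degeneration or collapsing occurs up to time $T$; fixing the reference metric $\bar g:=g(\cdot,0)$, all subsequent tensor norms are comparable to their $\bar g$-norms.

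The central step is a Bernstein--Bando--Shi-type bootstrap for the higher derivatives. Starting from Huisken's evolution equation
$$
\frac{\partial}{\partial t}h_{ij}=\Delta h_{ij}-2Hh_{il}g^{lm}h_{mj}+|A|^2h_{ij},
$$
and commuting $\nabla$ with $\Delta$ (which produces curvature terms that are themselves polynomial in $A$), one obtains for each $m\ge1$ a reaction--diffusion inequality of the schematic form
$$
\frac{\partial}{\partial t}|\nabla^m A|^2\le\Delta|\nabla^m A|^2-2|\nabla^{m+1}A|^2+\sum_{i+j+k=2m}\nabla^i A\ast\nabla^j A\ast\nabla^k A.
$$
I would then apply the maximum principle to the weighted quantities $f_m=t^{m+1}|\nabla^{m+1}A|^2+N_m\,t^{m}|\nabla^{m}A|^2$: the weight $t^{m}$ compensates for the absence of any derivative bound at $t=0$, while a sufficiently large constant $N_m$ lets the negative term $-2N_m t^m|\nabla^{m+1}A|^2$ absorb the contribution $|\nabla^{m+1}A|^2$ coming from $\partial_t(t^{m+1}|\nabla^{m+1}A|^2)$. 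Together with Hamilton-type interpolation inequalities for tensors on the evolving manifold, this gives, by induction on $m$, bounds $|\nabla^m A|^2\le C_m$ on $M^n\times[\tau,T)$ for every $m$ and some $\tau\in(0,T)$, with $C_m=C_m(n,m,C_0,T,M_0)$.

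With all derivatives of $A$ bounded and the metrics uniformly equivalent, I would pass to the limit. The flow equation gives $|\partial_t X|=|H|\le\sqrt{n}\,C_0$, so $X(p,\cdot)$ is uniformly Lipschitz in $t$; differentiating the flow repeatedly and invoking the derivative bounds shows that all space--time derivatives of $X(\cdot,t)$, measured against the fixed reference, are uniformly bounded and Cauchy as $t\to T$. Hence $X(\cdot,t)\to X(\cdot,T)$ in $C^\infty$, and $M_T:=X(\cdot,T)(M^n)$ is a smooth closed embedded hypersurface, embeddedness surviving the limit because the metrics do not degenerate and embeddedness is preserved along the flow. Taking $M_T$ as initial data and invoking short-time existence for mean curvature flow yields a smooth solution on $[T,T+\varepsilon)$; by uniqueness this agrees with the original flow as $t\to T^-$ and therefore extends it smoothly past $T$.

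I expect the higher-derivative estimates of the second paragraph to be the main obstacle. Both the precise derivation of the evolution inequalities for $|\nabla^m A|^2$ with their commutator (curvature) terms, and the choice of the weights $t^m$ and constants $N_m$ so that the maximum-principle argument closes inductively, require the interpolation machinery and careful bookkeeping that constitute the technical heart of the theorem; the metric-equivalence and restart steps are comparatively routine once these estimates are in hand.
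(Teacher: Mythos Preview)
The paper does not supply its own proof of this statement; Theorem~1.1 is quoted as a known result of Huisken with a citation to \cite{H2} and is used only as background motivation. Your outline is the standard argument (uniform equivalence of the evolving metrics from the bound on $A$, a Bernstein--Bando--Shi bootstrap to control all $|\nabla^m A|$ via the maximum principle, then a $C^\infty$ limit at $t=T$ and short-time existence to restart), which is essentially how Huisken's original proof and the textbook treatments proceed, so there is nothing in the present paper to compare against. One minor remark: since $M_0$ is smooth and closed, all $|\nabla^m A|$ are already finite at $t=0$, so the $t^m$ weights in your $f_m$ are not strictly necessary for this statement; they make the derivative bounds depend only on $C_0$ rather than on the initial data, which is a stronger conclusion than you need here, but either version of the induction closes.
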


So a natural question is that whether the mean curvature flow could be smoothly extended if we only give the assumptions on the mean curvature? In \cite{LS} and \cite{LS1}, Le and Sesum showed that if $M^n_t\subset \mathbb{R}^{n+1}$ is the closed mean convex or Type I solution to mean curvature flow with $||H||_{L^{\alpha}(M\times [0,T))}<\infty$ for $\alpha\geq \frac{n-2}{2}$, then the flow can be smoothly extended over time $T$.
Cooper \cite{Cooper} also proved that if the close mean curvature flow with bounded mean curvature satisfying $|A|^p(T-t)\leq C$ for $p \in (1,2]$, then the flow can be smoothly extended over time $T$. For this topic, one may see \cite{LS} and \cite{LS2} the references therein for more information. It is also conjectured (see \cite{LS2}) that the mean curvature flow for the hypersurfaces of Euclidean space with its dimension less or equal to $7$ can be smoothly extended if the mean curvature stays bounded.

In this paper, we prove that if $M^n_t\subset \mathbb{R}^{n+1}$,
$2\leq n\leq 6$, is the $n$-dimensional closed embedded $\mathcal{F}-$stable solution to mean curvature flow with multiplicity one and its mean curvature of $M_t$ is uniformly
bounded on $[0,T)$ for $T<\infty$, then the flow can be smoothly extended over time $T$.
Recall that, in the beautiful  paper \cite{CM1}, Colding and Minicozzi
studied the entropy stability of self-shrinkers in the mean curvature flow.
The main tool for Colding and Minicozzi's work is the following $\mathcal{F}$-functional
\begin{equation}\label{CM}
F_{y_0,t}(M)=\int_{M}(4\pi t) ^{-\frac{n}{2}}e^{-\frac{|x-y_0|^2}{4t}}d\mu(x), \ t\in (0,+\infty),\ y_0\in\mathbb{R}^{n+m},
\end{equation}
which can be traced back to Huisken¡¯s monotonicity formula \cite{H1}.
A self-shrinker is called $\mathcal{F}$-stable if it is a local minimum for the
$\mathcal{F}$-functional. One is more interested in the $\mathcal{F}$-stable self-shrinkers since
the unstable ones could be perturbed away thus may not represent generic
singularities.
Colding and Minicozzi \cite{CM1} proved the self-shrinkers are the critical points of the $\mathcal{F}$-functional and the spheres,
cylinders and planes are the only $\mathcal{F}$-stable self-shrinkers.

The following theorem is main result of this paper.
\begin{thm}\label{main}
If $M^n_t\subset \mathbb{R}^{n+1}$,
$2\leq n\leq 6$, is the $n$-dimensional closed embedded $\mathcal{F}-$stable solution to mean curvature flow with multiplicity one and its mean curvature of $M^n_t$ is uniformly
bounded on $[0,T)$ for $T<\infty$, then the flow can be smoothly extended over time $T$.
\end{thm}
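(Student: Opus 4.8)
\emph{Proof strategy.} The plan is to argue by contradiction: blow up at the first singular time, identify the resulting tangent flow via the Colding--Minicozzi classification recalled above, and then use the bound on the mean curvature to force that tangent flow to be a multiplicity-one hyperplane, which is incompatible with the point being singular. So suppose the flow cannot be smoothly extended past $T$. By Huisken's theorem (Theorem 1.1) the second fundamental form cannot stay bounded, so $\sup_{M_t}|A|\to\infty$ as $t\nearrow T$; since a uniform bound on $|H|$ bounds the speed of the flow, $M_t$ remains in a fixed ball, and hence there is a point $x_0\in\mathbb{R}^{n+1}$ which is a singular point of the flow at time $T$. By Huisken's monotonicity formula \cite{H1} together with White's local regularity theorem, the Gaussian density satisfies $\Theta(x_0,T)>1$. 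Consider the parabolic rescalings $M^{\lambda}_{\tau}:=\lambda\,(M_{T+\lambda^{-2}\tau}-x_0)$; by the monotonicity formula and the standard compactness theory for (Brakke) mean curvature flows, after passing to a subsequence $\lambda=\lambda_i\to\infty$ the rescaled flows converge to a tangent flow $\Sigma_\tau=\sqrt{-\tau}\,\Sigma$ $(\tau<0)$, where $\Sigma$ is a self-shrinker, $H_\Sigma=\tfrac12\langle x,\nu_\Sigma\rangle$.

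Now the hypotheses enter. By the multiplicity-one assumption $\Sigma$ is attained with multiplicity one, and by the $\mathcal{F}$-stability assumption the self-shrinker $\Sigma$ arising at $(x_0,T)$ is $\mathcal{F}$-stable; since $2\le n\le 6$, the Colding--Minicozzi classification \cite{CM1} forces $\Sigma$, after a rigid motion, to be one of the round sphere $S^{n}_{\sqrt{2n}}$, a round cylinder $S^{k}_{\sqrt{2k}}\times\mathbb{R}^{n-k}$ with $1\le k\le n-1$, or the hyperplane $\mathbb{R}^n$. In particular $\Sigma$ is smooth and embedded and is attained with multiplicity one, so by Brakke's (equivalently White's) local regularity theorem the convergence of the rescaled flows to $\{\Sigma_\tau\}$ is smooth on compact subsets of space--time. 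On the other hand, the mean curvature of the rescaled surface scales by $\lambda^{-1}$, so $\sup_{M^{\lambda}_{-1}}|H|=\lambda^{-1}\sup_{M_{T-\lambda^{-2}}}|H|\le C\lambda^{-1}\to 0$; passing this bound to the smooth limit gives $H_\Sigma\equiv 0$. But among the three possibilities only the hyperplane is minimal (on $S^{n}_{\sqrt{2n}}$ one has $H\equiv\sqrt{n/2}$, on $S^{k}_{\sqrt{2k}}\times\mathbb{R}^{n-k}$ one has $H\equiv\sqrt{k/2}$; alternatively, $H_\Sigma=0$ together with the self-shrinker equation gives $\langle x,\nu_\Sigma\rangle\equiv 0$, so $\Sigma$ is a smooth minimal cone in $\mathbb{R}^{n+1}$ and hence a hyperplane). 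Therefore the tangent flow at $(x_0,T)$ is a static multiplicity-one hyperplane, so $\Theta(x_0,T)=1$, contradicting $\Theta(x_0,T)>1$. This contradiction shows the flow extends smoothly over time $T$.

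The step I expect to be the main obstacle is upgrading the weak (measure-theoretic) convergence of the rescaled flows to \emph{smooth} convergence to $\{\Sigma_\tau\}$: this is precisely what permits the uniform decay $\sup_{M^{\lambda}_{-1}}|H|\to 0$ to be transferred to $H_\Sigma\equiv 0$, and it is also what makes the Colding--Minicozzi classification directly applicable. The two standing hypotheses are exactly what make this legitimate: the multiplicity-one assumption excludes the higher-multiplicity limits for which Brakke-type regularity fails, and the $\mathcal{F}$-stability assumption, through \cite{CM1} and the dimension bound $n\le 6$, pins $\Sigma$ down to one of the smooth model self-shrinkers, after which local regularity (pseudolocality) upgrades the convergence and the remainder is a one-line computation. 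The more routine points --- that the rescaled family is a genuine mean curvature flow off its singular set with the stated curvature scaling, and that the blow-up of a flow with $\sup|A|\to\infty$ at $T$ is indeed a nontrivial self-shrinker --- are standard consequences of Huisken's monotonicity formula.
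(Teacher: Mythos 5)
Your proposal is correct and follows essentially the same strategy as the paper: blow up at the singular point, pass to the Ilmanen--White tangent flow, use the scaling $H_j=\lambda_j^{-1}H\to 0$ to force the limit to be minimal, invoke the Colding--Minicozzi classification of $\mathcal{F}$-stable shrinkers for $n\le 6$, and rule out both resulting models. The one step you handle differently is the exclusion of the hyperplane: you argue via Gaussian density (a multiplicity-one static plane has $\Theta=1$, while White's local regularity forces $\Theta>1$ at a genuine singular point), whereas the paper proves a separate lemma combining White's regularity with the Chen--Le pseudolocality theorem and Huisken's lower bound $\max|A|\ge \frac{1}{2(T-t)}$; the two arguments are equivalent in substance, both resting on White's theorem, and yours is the more economical. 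One small inaccuracy: the Colding--Minicozzi classification of $\mathcal{F}$-stable self-shrinkers yields only the round sphere and the hyperplane --- the cylinders $S^k\times\mathbb{R}^{n-k}$ are $\mathcal{F}$-unstable --- but this is harmless since you exclude them by $H\neq 0$ anyway.
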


The organization of the paper is as follows. In section 2, we recall some basic properties for the mean curvature flow.
In section 3, we give the proof of Theorem \ref{main}.

 \section{preliminaries}

In this section, we recall some basic properties for the mean curvature flow.

In \cite{H1}, Huisken introduced his entropy  which becomes one of powerful tools for studying mean curvature flow.
Huisken's entropy enjoys very nice analytic and geometric properties,
including in particular the monotonicity of the entropy.
The Huisken's entropy is defined as the integral of backward heat kernel:
\begin{equation}\label{Huisken}
\theta(M,T,y_0) =\int_{M}[4\pi (T-t)] ^{-\frac{n}{2}}e^{-\frac{|x-y_0|^2}{4(T-t)}}d\mu(x),\ t\in (-\infty,T),\ y_0\in\mathbb{R}^{n}.
\end{equation}
Huisken \cite{H1} proved his entropy (\ref{Huisken}) is monotone non-increasing in $t$ under the mean curvature flow.

Recall the definition of a tangent flow at a point $(y_0,T)$ in space-time of a mean curvature flow. First translate the closed mean curvature
flow $M_t$ in space-time to move $(y_0,T)$ to $(0,0)$ and then take a sequence of parabolic dilations
$(x,t)\to (\lambda_jx,\lambda^2_j t)$ with
$\lambda_j \to \infty$ to get a sequence of mean curvature flow
\begin{equation}\label{rescaled}
M^j_t=\lambda_j(M_{\lambda_j^{-2}t+T}-y_0).
\end{equation}
 Using Huisken's monotonicity formula \cite{H1}, and Brakke's compactness theorem \cite{Bra78} , Ilmanen \cite{I1} and White \cite{W1} show that a subsequence of $M^j_t$ converges weakly to a
limit flow $T_t$ whch is called the tangent flow at $(y_0,T)$.
The tangent flow achieve equality in Huisken's monotonicity formula and, thus, must be a self-shrinker in weak sense.
Precisely,

\begin{thm}\cite{I1} \cite{W1} \label{IW}
For any rescaled mean curvature flow (\ref{rescaled}), any point $(y_0,T)$, there exists a subsequence which we still denote by $\lambda_j$ and a limiting Brakke flow $\nu_t$ such that $\mu^{\lambda_j}_t \rightharpoonup \nu_t$ in the sense of Radon measures for all $t$, where $\mu^{\lambda_j}_t$ is the area measure of $M^{\lambda_j}_t$.  Moreover, $\nu_{-1}$ satisfies
$$ \mathbf{H}_{\infty}(x)+\frac{S(x)^{\perp}\cdot x}{2}=0,\ \ \ \nu_{-1}-a.e. x,$$
where $\mathbf{H}_{\infty}\in L^1_{loc}(\nu_{-1})$ is the mean curvature vector of $\nu_{-1}$ in weak sense, and $S(x)$ denotes the the projection onto the tangent space.
\end{thm}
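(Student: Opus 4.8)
The plan is to combine the scale invariance of Huisken's monotonicity formula with Brakke's compactness theorem, and then to read off the weak self-shrinker equation from the equality case of the monotonicity. Throughout I use that a smooth properly embedded solution of the flow is a unit-density integral Brakke flow, and that each parabolically rescaled flow $M^j_t = \lambda_j(M_{\lambda_j^{-2}t+T}-y_0)$ of (\ref{rescaled}) is again a mean curvature flow, hence again such a Brakke flow.

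First I would record the scale-invariant form of the monotonicity. Writing $\rho_{(y_0,T)}(x,t) = [4\pi(T-t)]^{-n/2}e^{-|x-y_0|^2/(4(T-t))}$, Huisken's formula \cite{H1} gives
$$\frac{d}{dt}\int_{M_t}\rho_{(y_0,T)}\,d\mu = -\int_{M_t}\Bigl|\mathbf{H}+\frac{(x-y_0)^\perp}{2(T-t)}\Bigr|^2\rho_{(y_0,T)}\,d\mu \le 0,$$
so the density $\theta(M,T,y_0)(t)$ of (\ref{Huisken}) is non-increasing and converges to a limit $\Theta$ as $t\to T^-$. A direct change of variables ($T-t=\lambda_j^{-2}(-s)$, $d\mu$ scaling by $\lambda_j^{n}$, the exponent and the normalizing power of $(T-t)$ being scale invariant) shows that the Gaussian integral of the rescaled flow $M^j_s$ about $(0,0)$ at time $s<0$ equals $\theta(M,T,y_0)(\lambda_j^{-2}s+T)$; letting $j\to\infty$ with $s$ fixed forces $\lambda_j^{-2}s+T\to T$, so this Gaussian integral tends to $\Theta$ for every fixed $s<0$.

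Second, to produce the limit flow I would invoke Brakke compactness. Since the entropy $\sup_{y,s}F_{y,s}$ is monotone non-increasing along the flow and finite for the smooth closed initial surface, every $M^j_t$ has entropy bounded by that of $M_0$; a Gaussian-density bound at all centers and scales yields uniform local area-ratio bounds, which is exactly the hypothesis of Brakke's compactness theorem \cite{Bra78}. Passing to a subsequence (still denoted $\lambda_j$), I obtain an integral Brakke flow $\nu_t$ with $\mu^{\lambda_j}_t \rightharpoonup \nu_t$ as Radon measures for a.e.\ $t$, upgraded to all $t$ by upper semicontinuity of the mass. I then pass the monotonicity identity to this limit: the exponential weight together with the uniform local area bounds controls the Gaussian tails, so the Gaussian integrals of $M^j_s$ converge to that of $\nu_s$, which by the first step equals the constant $\Theta$ for all $s<0$. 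Since Huisken's monotonicity persists (in Ilmanen's weak form) for the limit, constancy in $s$ of its Gaussian density forces the integrand on the right-hand side to vanish, giving $\mathbf{H}_\infty+\frac{x^\perp}{2(-s)}=0$ $\nu_s$-a.e.; at $s=-1$, writing $x^\perp=S(x)^\perp\cdot x$, this is the asserted equation $\mathbf{H}_\infty(x)+\frac{S(x)^\perp\cdot x}{2}=0$, $\nu_{-1}$-a.e.

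The main obstacle is carrying out the last step rigorously in the varifold category. One needs the weak Huisken monotonicity formula for integral Brakke flows together with its rigidity, namely that vanishing of the weighted tilt-drift term forces the pointwise self-shrinker equation $\nu_{-1}$-almost everywhere, rather than merely constancy of a scalar quantity; this requires that $\mathbf{H}_\infty\in L^1_{loc}(\nu_{-1})$ exist as a weak (first-variation) mean curvature and that the Gaussian integrals converge with no loss of mass. Controlling the noncompact Gaussian weight and excluding such mass loss is the delicate analytic point, and is precisely where the constructions of Ilmanen \cite{I1} and White \cite{W1} enter.
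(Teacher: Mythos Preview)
The paper does not give its own proof of this theorem; it is stated as a quoted result from Ilmanen \cite{I1} and White \cite{W1} and used as a black box in the subsequent arguments. Your outline is a faithful sketch of the standard proof in those references: scale invariance of Huisken's monotonicity makes the Gaussian density of the rescaled flows converge to the constant $\Theta$, uniform entropy bounds feed into Brakke's compactness to extract the limit $\nu_t$, and constancy of the Gaussian density along the limit together with the weak (Ilmanen) form of the monotonicity formula forces the self-shrinker equation $\nu_{-1}$-a.e. You also correctly flag the genuinely delicate step --- justifying the weak monotonicity for the limiting Brakke flow, controlling the noncompact Gaussian weight, and extracting the pointwise equation from vanishing of the integrated defect --- as the place where the machinery of \cite{I1} and \cite{W1} is really needed.
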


In \cite{CM1}, Colding and Minicozzi proved the following

\begin{thm}\cite{CM1}\label{CM1}
Let $T_t$ be a tangent flow of a mean curvature flow at a smooth closed embedded
hypersurface in $\mathbb{R}^{n+1}$. If the regular part of $T_{-1}$ is orientale
and $\mathcal{F}-$stable and the singular set has finite $(n-2)$-dimensional Hausdorff measure
and $n\leq 6$, then $T_{-1}$ is either a round sphere or a hyperplane.
\end{thm}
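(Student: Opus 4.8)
The plan is to reduce the statement to a spectral analysis of the stability (drift) operator on the smooth regular part of $T_{-1}$, to use the smallness of the singular set to legitimize that analysis, and then to run the Colding--Minicozzi rigidity: force the shrinker to be mean convex, invoke Huisken's classification of mean convex shrinkers, and eliminate the cylinders by exhibiting an $\mathcal{F}$-unstable direction. By Theorem \ref{IW} the regular part $\Sigma=\mathcal{R}(T_{-1})$ is a smooth embedded self-shrinker with $H=\tfrac12\langle x,\nu\rangle$ and multiplicity one, while $\mathcal{S}=T_{-1}\setminus\Sigma$ has finite $(n-2)$-dimensional Hausdorff measure. On $\Sigma$, equipped with the Gaussian weight $e^{-|x|^2/4}$, the second variation of $F_{0,1}$ in a normal direction $f\nu$ equals, up to a positive constant, $-\int_\Sigma f\,\mathcal{L}f\,e^{-|x|^2/4}\,d\mu$, where $\mathcal{L}=\Delta-\tfrac12\langle x,\nabla\cdot\rangle+|A|^2+\tfrac12$. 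Differentiating $F_{x_0,t_0}$ in $x_0$ and $t_0$ produces exactly the functions $\langle v,\nu\rangle$ and $H$, so $\mathcal{F}$-stability means $-\int_\Sigma f\,\mathcal{L}f\,e^{-|x|^2/4}\,d\mu\ge 0$ for every variation $f$ that is $L^2(e^{-|x|^2/4})$-orthogonal to $H$ and to all $\langle v,\nu\rangle$, $v\in\mathbb{R}^{n+1}$. The two identities I will use throughout are $\mathcal{L}H=H$ and $\mathcal{L}\langle v,\nu\rangle=\tfrac12\langle v,\nu\rangle$.

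Since $\mathcal{S}$ has finite $(n-2)$-dimensional Hausdorff measure it has vanishing $2$-capacity, hence compactly supported functions on $\Sigma$ are dense in the weighted space $W^{1,2}(e^{-|x|^2/4})$ and every integration by parts below produces no boundary term along $\mathcal{S}$. Together with polynomial volume growth this makes $\mathcal{L}$ essentially self-adjoint with discrete spectrum on $L^2(\Sigma,e^{-|x|^2/4})$, and I may argue on $\Sigma$ as if it were complete. If $H\equiv 0$, the shrinker equation gives $\langle x,\nu\rangle\equiv 0$, so $\Sigma$ is a minimal cone; smoothness together with the removability of $\mathcal{S}$ then forces a hyperplane and the theorem holds in this case.

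So assume $H\not\equiv 0$, and let $\lambda_1$ be the top eigenvalue of $\mathcal{L}$, with its (simple, positive) ground state $\phi_1>0$. The identity $\mathcal{L}H=H$ gives $\lambda_1\ge 1$. Suppose $\lambda_1>1$: then $\phi_1$ is an eigenfunction whose eigenvalue differs from both $1$ and $\tfrac12$, so by self-adjointness $\phi_1$ is $L^2(e^{-|x|^2/4})$-orthogonal to $H$ and to every $\langle v,\nu\rangle$; thus $\phi_1$ is an admissible variation, yet its second variation is a negative multiple of $\lambda_1\int_\Sigma\phi_1^2\,e^{-|x|^2/4}\,d\mu<0$, contradicting $\mathcal{F}$-stability. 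Hence $\lambda_1=1$, so $H$ lies in the one-dimensional top eigenspace spanned by $\phi_1$; being a multiple of a positive function, $H$ does not change sign, and after orienting $\Sigma$ we may take $H\ge 0$, i.e. $\Sigma$ is mean convex. This dichotomy is the heart of the argument.

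Finally I would classify. A smooth complete embedded mean convex self-shrinker of polynomial volume growth is a generalized cylinder $S^k(\sqrt{2k})\times\mathbb{R}^{n-k}$ by Huisken's rigidity in the form that removes the curvature bound. A direct second-variation computation then shows that for $0<k<n$ the cylinder carries a compactly supported variation, not absorbed by the translation or scaling directions, on which the second variation is negative, so these cylinders are $\mathcal{F}$-unstable; only $k=n$, the round sphere $S^n(\sqrt{2n})$, and $k=0$, the hyperplane, survive. As both are smooth, the codimension-$\ge 2$ set $\mathcal{S}$ is removable and in fact empty, so $T_{-1}$ is globally a round sphere or a hyperplane. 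The hypothesis $n\le 6$ enters here: through the fact that stable minimal cones in $\mathbb{R}^{n+1}$ are flat for $n\le 6$ (Simons), which is what guarantees that the small singular set does not obstruct the passage from the regular part to the global smooth model, and to keep the mean-convex classification within its valid range. I expect the \emph{main obstacle} to be precisely this interface between the weak Brakke object and the smooth spectral theory: verifying that the $(n-2)$-dimensional singular set contributes no boundary terms and that the admissible class of variations is complete, so that the clean eigenvalue computation on $\Sigma$ is rigorous; the cylinder instability is the secondary, more computational, point.
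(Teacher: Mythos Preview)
The paper does not give its own proof of this statement; it is quoted as a result of Colding--Minicozzi \cite{CM1} and used as a black box in the proof of Theorem~\ref{main}. There is therefore nothing in the present paper to compare your argument against.

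That said, your proposal is an accurate outline of the Colding--Minicozzi proof itself: the second-variation formula for $F$, the eigenfunction identities $\mathcal{L}H=H$ and $\mathcal{L}\langle v,\nu\rangle=\tfrac12\langle v,\nu\rangle$, the observation that if the top eigenvalue exceeded $1$ the (positive, simple) ground state would be an admissible $\mathcal{F}$-unstable variation, the resulting mean convexity $H\ge 0$, the classification of mean convex shrinkers as generalized cylinders, and the explicit $\mathcal{F}$-instability of $S^k\times\mathbb{R}^{n-k}$ for $0<k<n$. Your identification of the main obstacle --- justifying the spectral theory across a singular set of finite $(n-2)$-Hausdorff measure --- is exactly where the work lies in \cite{CM1}. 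One small sharpening: the hypothesis $n\le 6$ in \cite{CM1} enters not quite as ``Simons' theorem on stable minimal cones'' but through the weighted analogue (Schoen--Simon type regularity for $\mathcal{L}$-stable hypersurfaces), which guarantees that once mean convexity is established the singular set is actually empty and the smooth classification applies globally.
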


Next we recall White's regularity theorem for the mean curvature flow.
\begin{thm}\cite{W2}\label{W2}
There are numbers $\epsilon=\epsilon(n)>0$ and $C=C(n)<\infty$ with the following property.
If $M_t$ is an $n$-dimensional smooth, proper embedded mean curvature flow in an open subset $U$ of the spacetime
$\mathbb{R}^n\times \mathbb{R}$ and if $\theta(M,T,y_0)$ of $M_t$ are bounded above by $1+\epsilon$, then at
each spacetime point $X=(x,t)$ of $M$, the norm of the second fundamental form of $M_t$ is bounded by $\frac{C}{\delta(X,U)}$,
where $\delta(X,U)$ is the infimum of $||X-Y||$ among all points $Y=(y,s)\in U^c$ with $s\leq t$, where $||\cdot||$ is the parabolic distance.
\end{thm}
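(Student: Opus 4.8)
The plan is to argue by contradiction and reduce the singular behaviour at time $T$ to a tangent flow which, thanks to the bounded mean curvature, must be a flat plane. Suppose the flow could not be smoothly extended past $T$. Then by Huisken's theorem \cite{H2} the second fundamental form cannot stay uniformly bounded, so $\sup_{M^n_t}|A|\to\infty$ as $t\to T$ and the flow has a genuine singular point $y_0$ at time $T$. I would translate $(y_0,T)$ to $(0,0)$ and form the parabolically rescaled flows $M^j_t=\lambda_j(M_{\lambda_j^{-2}t+T}-y_0)$ of (\ref{rescaled}), and then apply Theorem \ref{IW} to pass to a tangent flow: a limiting Brakke flow $\nu_t$ with $\mu^{\lambda_j}_t\rightharpoonup\nu_t$ whose slice $\nu_{-1}$ is a weak self-shrinker satisfying $\mathbf{H}_{\infty}(x)+\tfrac12 S(x)^{\perp}\cdot x=0$ for $\nu_{-1}$-a.e.\ $x$.

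The \emph{decisive} use of the hypothesis is to track the mean curvature through the rescaling. Under a parabolic dilation by $\lambda_j$ the mean curvature rescales as $H^j=\lambda_j^{-1}H$, so the uniform bound $|H|\le C$ on $[0,T)$ forces $\sup|H^j|\le C\lambda_j^{-1}\to 0$. Carrying this bound into the weak limit (via the first-variation/distributional definition of $\mathbf{H}_{\infty}$ together with lower semicontinuity of the generalized mean curvature under varifold convergence) should yield $\mathbf{H}_{\infty}=0$ for $\nu_{-1}$. Substituting $\mathbf{H}_{\infty}=0$ into the self-shrinker equation of Theorem \ref{IW} gives $S(x)^{\perp}\cdot x=0$ $\nu_{-1}$-a.e., i.e.\ the position vector is everywhere tangent to $\nu_{-1}$; hence $\nu_{-1}$ is a static minimal cone with vertex at the origin.

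I would then check the hypotheses of Theorem \ref{CM1} for $\nu_{-1}$. Orientability of the regular part is inherited from the embeddedness of $M^n_t$, and $\mathcal{F}$-stability together with the multiplicity-one property are exactly the standing assumptions, passed to the tangent flow. For the singular set, the cone structure from the previous step is what makes the dimension hypothesis accessible: a multiplicity-one minimal cone in $\mathbb{R}^{n+1}$ with $2\le n\le 6$ has, by the interior regularity theory for such cones, singular set of Hausdorff dimension at most $n-7<0$, so the only possible singularity is the vertex, a single point, and therefore its $(n-2)$-dimensional Hausdorff measure is finite. Theorem \ref{CM1} then forces $\nu_{-1}$ to be either a round sphere or a hyperplane. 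A round shrinking sphere has nonvanishing mean curvature, which is incompatible with $\mathbf{H}_{\infty}=0$; hence $\nu_{-1}$ is a hyperplane, and by multiplicity one it is a multiplicity-one hyperplane.

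To finish, a multiplicity-one hyperplane realizes the limiting value $1$ in Huisken's monotonicity formula, so the Gaussian density $\theta(M,T,y_0)$ in (\ref{Huisken}) at the singular point equals $1<1+\epsilon$ for the dimensional constant $\epsilon=\epsilon(n)$ of Theorem \ref{W2}. White's regularity theorem (Theorem \ref{W2}) then bounds $|A|$ in a spacetime neighbourhood of $(y_0,T)$, so $(y_0,T)$ is in fact a regular point of the flow---contradicting the choice of $y_0$. Thus $T$ cannot be a singular time and the flow extends smoothly over $T$. I expect the \emph{main obstacle} to be the rigorous justification that the mean-curvature bound descends to the weak limit so that $\mathbf{H}_{\infty}=0$, and, as a secondary point, confirming that the minimal-cone structure genuinely certifies the $(n-2)$-dimensional finiteness of the singular set needed to invoke Theorem \ref{CM1}.
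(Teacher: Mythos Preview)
There is a mismatch between the statement and the proposal. The statement you were asked to prove is Theorem~\ref{W2}, White's local regularity theorem, which in this paper is a \emph{cited} result from \cite{W2} and carries no proof here. Your proposal does not attempt to prove White's theorem at all; it is instead a proof of Theorem~\ref{main}, the paper's main extension result. So as a proof of the stated theorem it is simply off target.

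Read as a proof of Theorem~\ref{main}, your argument follows the paper's almost verbatim through the key steps: contradiction, tangent-flow extraction via Theorem~\ref{IW}, the observation that $|H|\le C$ forces $\mathbf{H}_\infty=0$ so the tangent flow is a minimal cone, verification of the hypotheses of Theorem~\ref{CM1} in dimensions $2\le n\le 6$, and the elimination of the sphere via $\mathbf{H}_\infty=0$. The one genuine difference is in how the hyperplane is excluded. The paper isolates this as Lemma~\ref{lem} and, after using White's theorem to bound $|A_j|$ on the rescaled flows for $t<0$, invokes the Chen--Yin pseudolocality theorem (Theorem~\ref{CY}) to propagate the curvature bound forward toward the singular time and reach a contradiction with $|A|^2\ge \tfrac{1}{2(T-t)}$. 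You instead argue that a multiplicity-one planar tangent flow forces Gaussian density $1$ at $(y_0,T)$, and then apply White's theorem directly to the original flow in a parabolic neighbourhood of $(y_0,T)$ to conclude $(y_0,T)$ is regular. Your route is shorter and avoids pseudolocality; the only point you should make explicit is that White's theorem needs the density-ratio bound $1+\epsilon$ at \emph{all} centers and scales in the neighbourhood, which follows from Huisken monotonicity and upper semicontinuity of the Gaussian density once it equals $1$ at $(y_0,T)$.
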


Finally, we need the following pseudolocality theorem of the mean curvature flow due to B.L.Chen and Y.Le \cite{CY}.

\begin{thm}\cite{CY}\label{CY}
Let $\bar{M}$ be an $N$-dimensional complete manifold satisfying $\sum\limits^3_{i=0}|\bar{\nabla}\bar{Rm}|\leq c_0^2$
and $inj(\bar{M})\geq i_0>0$. Let $X_0:M\to \bar{M}$ be an $n$-dimensional properly embedded submanifold
with bounded second fundamental form $|A|\leq c_0$ in $\bar{M}$. We assume $M_0=X_0(M)$ is uniform graphic with some
$r>0$. Suppose $X(x,t)$ is a smooth solution to the mean curvature flow on $M\times [0,T_0]$
with $X_0$ as initial data. Then there is $T_1>0$ depending on $c_0,i_0,r$ and the dimension $N$ such that
$$|A|(x,t)\leq 2c_0$$
for all $x\in M$, $0\leq t\leq \min\{T_0,T_1\}$.
\end{thm}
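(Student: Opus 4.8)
The plan is to read the statement as a \emph{doubling-time estimate} for the second fundamental form and to prove it by a reaction--diffusion / maximum-principle argument, the essential work being the localization forced by the noncompactness of $M$ and $\bar M$. If $M$ were compact the conclusion would be almost immediate: the evolution of $|A|^2$ along the flow obeys a reaction--diffusion inequality, and the maximum principle reduces the growth of $\max_M|A|^2$ to an ordinary differential inequality whose solution cannot double in less than a definite time. The real content is that the same doubling time $T_1$ can be made uniform and local on a properly embedded, possibly noncompact $M$, and this is exactly what the hypotheses ``$M_0$ uniform graphic at scale $r$'' and ``$\sum_{i=0}^3|\bar\nabla^i\bar{Rm}|\le c_0^2$, $\mathrm{inj}(\bar M)\ge i_0$'' are designed to supply.

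First I would record the evolution equation for $|A|^2$ along the mean curvature flow in the general ambient manifold $\bar M$. A Simons-type computation gives a schematic inequality of the form
\begin{equation}
\partial_t|A|^2\le \Delta|A|^2-2|\nabla A|^2+c(n)\,|A|^4+c(n)\big(|\bar{Rm}|\,|A|^2+|\bar\nabla\bar{Rm}|\,|A|\big),
\end{equation}
where all ambient-curvature factors are controlled by $\sum_{i=0}^3|\bar\nabla^i\bar{Rm}|\le c_0^2$. Dropping the favorable term $-2|\nabla A|^2$ and bounding the reaction terms by a polynomial $P(s)=c(n)s^2+C(c_0,n)s+C(c_0,n)$ in $s=|A|^2$, the quantity $f(t)=\sup_M|A|^2(\cdot,t)$ satisfies, in the barrier sense, $f'\le P(f)$ with $f(0)\le c_0^2$. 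Since the solution of the ODE $g'=P(g)$, $g(0)=c_0^2$, stays below $4c_0^2$ on an interval $[0,T_1]$ with $T_1=T_1(c_0,n)>0$ (in the flat case $\bar{Rm}\equiv 0$ one gets explicitly $g(t)=c_0^2/(1-2c_0^2t)$, hence $T_1=3/(8c_0^2)$), one obtains $|A|\le 2c_0$ for $t\le T_1$ --- provided the maximum principle can be applied.

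The heart of the argument is therefore to justify this comparison when $M$ is noncompact, i.e.\ to rule out curvature arriving ``from infinity'' in short time. Here I would use the bounded ambient geometry to produce uniform normal-coordinate charts on balls of radius $\rho\sim\min(i_0,c_0^{-1})$ in which $\bar g$ is $C^3$-close to the Euclidean metric, and the uniform-graphic hypothesis to represent $M_0$ near each point as a graph over its tangent plane on a ball of radius $\sim r$, with small gradient and $|D^2u_0|\lesssim c_0$. Because $|A|\le 2c_0$ bounds the flow speed $|H|$, the image moves by at most $O(c_0 t)$, so for a short time the flow stays graphical on a definite fraction of the scale $\min(r,\rho)$; one can then localize the estimate with an Ecker--Huisken-type space-time cutoff $\varphi$ supported in such a chart, apply the maximum principle to $\varphi|A|^2$, and absorb the gradient-of-cutoff cross terms into $-2|\nabla A|^2$. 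Since the local-graph radius and the chart size are uniform across $M$, the cutoff error terms are uniformly bounded and the resulting $T_1$ depends only on $c_0,i_0,r,N$.

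I expect the localization to be the main obstacle. On a compact manifold the estimate is a one-line consequence of the evolution equation; the genuine difficulty, and the reason the uniform-graphic assumption is indispensable, is controlling the interaction of the cutoff with both the moving submanifold and the ambient curvature --- in particular showing that the support of $\varphi$ cannot be overtaken by incoming curvature concentration during the short time interval. Once this uniform localized maximum principle is in place, the ODE comparison of the second paragraph yields $|A|(x,t)\le 2c_0$ for all $x\in M$ and all $0\le t\le\min\{T_0,T_1\}$, which is the assertion.
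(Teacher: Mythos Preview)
This theorem is not proved in the present paper; it is quoted from Chen and Yin \cite{CY} and used as a black box in the proof of Lemma~\ref{lem}. So there is no proof in the paper to compare your attempt against.

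That said, your sketch is a reasonable heuristic but does not match how Chen and Yin actually establish the result, and it has a circularity that would need to be closed. You write ``Because $|A|\le 2c_0$ bounds the flow speed $|H|$, the image moves by at most $O(c_0 t)$, so for a short time the flow stays graphical on a definite fraction of the scale $\min(r,\rho)$'' and then feed this into the localized maximum principle. But the bound $|A|\le 2c_0$ is exactly the conclusion you are trying to prove; without it you have no a priori control on the displacement of the surface or on the survival of the graph chart, so the cutoff/graph region could in principle be destroyed before the Ecker--Huisken-type argument kicks in. A continuity/open--closed argument can sometimes fix this, but you have not supplied one, and on a genuinely noncompact $M$ the first step ($f(t)=\sup_M|A|^2$ attained, $f$ continuous, etc.) is itself not free.

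Chen and Yin's own proof proceeds quite differently: it is a blow-up/point-picking contradiction argument in the spirit of Perelman's pseudolocality. Assuming the conclusion fails, one takes a sequence of flows violating the estimate at earlier and earlier times, performs point selection to find almost-maximal curvature points, rescales so that the curvature there is of unit size, and extracts a limiting mean curvature flow. The uniform-graphic and bounded-ambient-geometry hypotheses force the initial slice of the limit to be a flat plane, while the rescaling guarantees nontrivial curvature at a positive time; this contradicts uniqueness (also proved in \cite{CY}) of the mean curvature flow starting from a plane. The ambient bounds $\sum_{i=0}^3|\bar\nabla^i\bar{Rm}|\le c_0^2$ and $\mathrm{inj}(\bar M)\ge i_0$ are used to get the compactness and to see that the blown-up ambient spaces converge to $\mathbb{R}^N$, and the uniform-graphic radius $r$ is what forces the initial limit slice to be planar. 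Your cutoff/maximum-principle route, even if the circularity were repaired, would be a genuinely different argument from the one being cited.
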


\section{proof of Theorem \ref{main}}
Before presenting the proof of Theorem \ref{main}, we need the following lemma.

\begin{lem}\label{lem}
Let $M_t\subset \mathbb{R}^{n+1}$ be a smooth closed embedded solution to mean curvature flow for
$t\in[0,T)$ with $T<\infty$ is the first singularity time. Moreover, the singularity is multiplicity one. Then the hyperplane cannot arise as a tangent flow to this mean curvature flow.
\end{lem}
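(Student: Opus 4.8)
The plan is to argue by contradiction, the essential input being White's local regularity theorem (Theorem~\ref{W2}). Suppose a hyperplane arises as a tangent flow of $M_t$ at a singular spacetime point $(y_0,T)$; such a point exists because, by Huisken's extension theorem \cite{H2}, $T$ being the first singularity time forces $\sup_{M_t}|A|\to\infty$ as $t\to T$, and a standard point-selection argument then produces a singular point of the flow at time $T$, where a tangent flow is taken. By the multiplicity-one hypothesis this tangent flow is a multiplicity-one hyperplane, and by Theorem~\ref{IW} it is a weak self-shrinker; since a hyperplane satisfying the weak self-shrinker equation of Theorem~\ref{IW} necessarily passes through the origin, it is a hyperplane $P$ through the origin, whose Gaussian density is
\[
\int_{P}(4\pi)^{-\frac n2}\,e^{-\frac{|x|^2}{4}}\,d\mathcal{H}^n(x)=\int_{\mathbb{R}^n}(4\pi)^{-\frac n2}\,e^{-\frac{|x|^2}{4}}\,dx=1 .
\]

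First I would pin down Huisken's density at $(y_0,T)$. By Huisken's monotonicity formula \cite{H1} the density $\theta(M,T,y_0)$ is non-increasing in $t$, so $\Theta(y_0,T):=\lim_{t\to T^-}\theta(M,T,y_0)$ exists; moreover equality in Huisken's monotonicity along the rescalings~\eqref{rescaled} makes the tangent flow self-similar and identifies $\Theta(y_0,T)$ with the Gaussian density of $P$, whence $\Theta(y_0,T)=1$. Next, using the upper semicontinuity of the Gaussian density in spacetime \cite{W1} together with the monotonicity of $\theta$ in its center and scale, I would produce a parabolic neighborhood $U$ of $(y_0,T)$ on which \emph{every} Gaussian density ratio is strictly less than $1+\epsilon$, where $\epsilon=\epsilon(n)>0$ is the constant from Theorem~\ref{W2}.

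It then remains to apply Theorem~\ref{W2} on $U$: it gives $|A|(x,t)\le C/\delta\bigl((x,t),U\bigr)$ at every spacetime point $(x,t)$ of $M$ in $U$, hence $|A|$ is uniformly bounded on $M_t$ in a smaller spacetime neighborhood of $(y_0,T)$. By Huisken's extension theorem \cite{H2} the flow is then smooth across time $T$ near $y_0$, so $(y_0,T)$ is a regular point, contradicting its choice as a singular point. Therefore no hyperplane can arise as a tangent flow of this flow.

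The main obstacle is the passage from the single value $\Theta(y_0,T)=1$ to the uniform bound $<1+\epsilon$ on \emph{all} Gaussian density ratios throughout a full spacetime neighborhood of $(y_0,T)$: this requires combining Huisken's monotonicity (to compare density ratios at nearby centers and scales with the limiting density at $(y_0,T)$) with the upper semicontinuity of the density jointly in point and scale, after which White's $\epsilon$-regularity does the rest. A secondary point requiring care is the multiplicity-one hypothesis, without which the tangent hyperplane could carry multiplicity $k\ge2$, giving $\Theta(y_0,T)=k\ge2$ and making Theorem~\ref{W2} inapplicable.
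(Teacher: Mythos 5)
Your argument is correct, and it rests on the same key input as the paper's proof: the multiplicity-one hypothesis pins the Gaussian density of a hyperplane tangent flow at $1$, and White's local regularity theorem (Theorem~\ref{W2}) then rules out a singularity. The two proofs close the contradiction differently, however. The paper applies Theorem~\ref{W2} to the rescaled flows $M^{\lambda_j}_t$ to obtain a uniform bound on $|A_j|$ for $t<0$, propagates that bound forward in rescaled time with the Chen--Le pseudolocality theorem (Theorem~\ref{CY}), and then contradicts Huisken's lower bound $\max_{M_t}|A|\geq \frac{1}{2(T-t)}$ on the blow-up rate at a finite singular time. You instead stay at the original scale: you transfer the density bound $<1+\epsilon$ from the single point $(y_0,T)$ to a full spacetime neighborhood via Huisken's monotonicity and upper semicontinuity of the density, apply Theorem~\ref{W2} once on that neighborhood, and conclude that $(y_0,T)$ is a regular point. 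Your route is the more standard one; it avoids both pseudolocality and the blow-up-rate lemma, and it makes explicit the step the paper glosses over, namely that White's theorem requires the density ratios to be small at \emph{all} nearby centers and scales, not merely that $\theta(M^{\lambda_j}_t,0,o)\to 1$. Two minor points: the final appeal to Huisken's extension theorem is not quite the right citation, since that theorem is global on a closed flow --- but Theorem~\ref{W2} already yields local bounds on all derivatives of $A$, so smoothness across $T$ near $y_0$ follows without it; and, as you implicitly do, the lemma must be read as a statement about tangent flows at \emph{singular} points, since at a regular spacetime point the tangent flow of any smooth flow is a multiplicity-one hyperplane.
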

\begin{proof}
We argue by contradiction.
Rescale the flow parabolically at $(y_0,T)$ as following
$$M^{\lambda_j}_t=\lambda_j(M_{T+\lambda^{-2}_jt}-y_0),$$
for $\lambda_j\to \infty$. Assume $T_t$ is the tangent flow of $M_t$ at $(y_0,T)$, which is the hyperplane.
By Theorem \ref{IW},there exists a subsequence which we still denote it by $\lambda_j$ and a limiting Brakke flow $\nu_t$
 such that $\mu^{\lambda_j}_{t} \rightharpoonup \nu_t$, where $\mu^{\lambda_j}_t$ is the area measure of $M^{\lambda_j}_t$. Moreover, the singularity is multiplicity one. It follows that $\theta(M^{\lambda_j}_t,0,o)\to \theta(T_t,0,o)=1$ for $t<0$.
By Theorem \ref{W2}, there exists $j_0>0$ such that $|A_j|$ is uniformly bounded for $j\geq j_0$ and $t<0$, where $A_j$ the second fundamental form of $M^{\lambda_j}_t$. So there exists a subsequence of $M^{\lambda_j}_t$ which we still denote it by $M^{\lambda_j}_t$ such that $M^{\lambda_j}_t$ converges to $M^{\infty}_t$ smoothly.

We fix $j$ sufficient large such that $M^{\lambda_j}_{T-T_0}$ satisfies the conditions of Theorem \ref{CY} and $\max\limits_{M^{\lambda_j}_{T-T_0}}|A_j|\leq c_0$. Then we have
\begin{equation}\label{bound}
\max\limits_{M^{\lambda_j}_{t}}|A_j|\leq 2c_0,
\end{equation}
 for $T-T_0\leq t< T$.

 By the Lemma 1.2 in \cite{H1}, we have
$$\max\limits_{M_t}|A|\geq \frac{1}{2(T-t)}$$
if the mean curvature flow blow-up at finite time $T$, where $A$ is the second fundamental form of $M_t$.
Then there exists blow-up sequence $(p_i,t_i)$ with $p_i\to p$ such that
$$|A|(p_i,t_i)\geq \frac{1}{2(T-t_i)}.$$
It follows that
$$|A_j|(p_i,\lambda_j(t_i-T))\geq \frac{1}{2\lambda_j(T-t_i)}.$$
For $i$ large enough, this contradicts to (\ref{bound}).
\end{proof}

Now we give the proof of Theorem \ref{main}.

\textbf{Proof of Theorem \ref{main}.}
We argue by contradiction. Suppose that there exists a mean curvature flow blows up at finite time and satisfying the conditions of Theorem \ref{main}. Rescale the flow parabolically at $(y_0,T)$ as following
$$M^{\lambda_j}_t=\lambda_j(M_{T+\lambda^{-2}_jt}-y_0),$$
for $\lambda_j\to \infty$. Assume $T_t$ is the tangent flow of $M_t$ at $(y_0,T)$. By Theorem \ref{IW},there exists a subsequence which we still denote it by $\lambda_j$ and a limiting Brakke flow $\nu_t$
such that $\mu^{\lambda_j}_{t} \rightharpoonup \nu_t$, where $\mu^{\lambda_j}_t$ is the area measure of $M^{\lambda_j}_t$.
 Moreover, $\nu_{-1}$ satisfies
$$ \mathbf{H}_{\infty}(x)+\frac{S(x)^{\perp}\cdot x}{2}=0,\ \ \ \nu_{-1}-a.e. x,$$
where $\mathbf{H}_{\infty}\in L^1_{loc}(\nu_{-1})$ is the mean curvature vector of $\nu_{-1}$ in weak sense, and $S(x)$ denotes the the projection onto the tangent space. Since mean curvature of $M_t$ is uniformly bounded, $\mathbf{H}_{\infty} \equiv 0\ a.e.$  It follows that the tangent flow $T_t$ is the minimal cone and the conditions of Theorem \ref{CM1} are satisfied. By Theorem \ref{CM1}, we conclude that $T_{-1}$ is either a round sphere or a hyperplane. This contradicts to Lemma \ref{lem} and $\mathbf{H}_{\infty} \equiv 0$.
$\Box$

\begin{rem} We remark that the condition $|H|\leq C$ on $[0,T)$ in Theorem \ref{main} can be replaced by the weak condition
$||H||_{L^\alpha(M_t\times [0,T))}<\infty$ for $\alpha\geq \frac{n+2}{2}$, where $H(\cdot,t)$ is the mean curvature of $M_t$. The reason is as follows. We still rescale the flow parabolically at $(y_0,T)$ as following
$$M^{\lambda_j}_t=\lambda_j(M_{T+\lambda^{-2}_jt}-y_0),$$
for $\lambda_j\to \infty$ and denote $H_j(\cdot,t)$ be the mean curvature of $M^{\lambda_j}_t$.  Assume $T_t$ is the tangent flow of $M_t$ at $(y_0,T)$ and
$H_{\infty}(\cdot,t)$ is the mean curvature of $T_t$.
 We calculate
\begin{eqnarray*}
\int^{-1}_{-2}\int_{B(o,R)\cap T_t}
|H_{\infty}|^{\frac{n+2}{2}} d\nu_t dt &\leq&
\lim\limits_{j\to\infty} \int^{-1}_{-2}\int_{B(o,R)\cap M^j_t} |H_{j}|^{\frac{n+2}{2}} d\mu^j_t dt\\
& =& \lim\limits_{j\to\infty} \int^{T-(\lambda_j)^{-1}}_{T-2(\lambda_j)^{-1}}
\int_{B(y_0,\lambda_j^{-\frac{1}{2}})\cap M_t} |H|^{\frac{n+2}{2}} d\mu_t dt\\
&\leq& \lim\limits_{j\to\infty} \int^{T-(\lambda_j)^{-1}}_{T-2(\lambda_j)^{-1}}\int_{M_t} |H|^{\frac{n+2}{2}} d\mu_t dt\\
& =& 0.
\end{eqnarray*}
The last equality holds, since $\int^T_0\int_{M_t} |H|^{\frac{n+2}{2}}
d\mu dt<\infty$. Then we conclude that $H_{\infty} \equiv 0$ a.e. Then we still can argue by the same way as the the proof of Theorem \ref{main}.
\end{rem}

\thanks{\textbf{Acknowledgement}: The author would like to thank for Dr.Anqiang Zhu, Dr.hongbin Qiu and Prof.Gaofeng Zhen for useful talking.}

\end{document}